\newtheorem{theorem}{Theorem}
\newtheorem{lemma}[theorem]{Lemma}
\theoremstyle{definition}
\newtheorem{definition}[theorem]{Definition}
\theoremstyle{remark}
\newcommand{\co}{\colon\thinspace}
\renewcommand{\phi}{\varphi}
\DeclareMathOperator{\Spec}{Spec}
\newcommand{\CatOf}[1]{\mathsf{#1}}
\title{There aren't that many Morava $E$--theories}
\author{Kiran Luecke}\author{Eric Peterson}
\begin{document}

\maketitle

\begin{abstract}
    Let $k$ be a perfect field of characteristic $p$. Associated to any (1-dimensional, commutative) formal group law of finite height $n$ over $k$ there is a complex oriented cohomology theory represented by a spectrum denoted $E(n)$ and commonly referred to as Morava $E$-theory. These spectra are known to admit $E_\infty$-structures, and the dependence of the $E_\infty$-structure on the choice of formal group law has been well studied (cf.\ [GH], [R], [L], Section 5, [PV]). In this note we show that the underlying homotopy type of $E(n)$ is independent of the choice of formal group law.
\end{abstract}

\section{Introduction}
In this section we collect some standard results in chromatic homotopy theory that will be used in the proof. A combined reference for (almost) all of them is [P]. All rings in this note are commutative, graded, and concentrated in even degree. Rings that are commonly ungraded (e.g. a perfect characteristic $p$ field $k$ and its $p$-typical Witt vectors $W(k)$) are viewed graded rings concentrated in degree zero.
\begin{definition}
In light of the above, define a \textit{formal group law} over a ring $R$ to be a (graded) ring map $MU_*\rightarrow R$. Recall that $MU_*$ is the coefficient ring of the complex bordism spectrum $MU$, and $MU_*\simeq\mathbb{Z}[a_1,a_2,...]$, $|a_i|=2i$. 
Define an \textit{ungraded formal group law} over $R$ to be a (graded) ring map $\mathrm{Laz}\rightarrow R$, $\mathrm{Laz}\simeq\mathbb{Z}[b_1,b_2,...]$, $|b_i|=0$.
\end{definition}

\begin{theorem}\label{LEFT} ([Lan], cf.\ also [H])
Let $f\co MU_*\rightarrow R$ be a formal group law over a ring $R$. Let $\mathcal{M}_{MU}$ be the stack associated to the the Hopf algebroid $(MU_*,MU_*MU)$. It admits a canonical map $\Spec MU_*\rightarrow \mathcal{M}_{MU}$. Then the assignment
$$X\mapsto MU_*X\otimes_{MU_*}R$$
defines a homology theory if and only if the composite
$$\Spec R\xrightarrow{f}\Spec MU_*\rightarrow \mathcal{M}_{MU}$$
is flat.
\end{theorem}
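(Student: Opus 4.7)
The plan is to translate the question into exactness of a tensor product on $MU_*MU$-comodules, using the standard equivalence between such comodules and quasi-coherent sheaves on $\mathcal{M}_{MU}$. Under this equivalence, the functor $(-) \otimes_{MU_*} R$ corresponds to pullback $\phi^*$ along the map $\phi \co \Spec R \to \mathcal{M}_{MU}$, and flatness of $\phi$ is by definition the exactness of $\phi^*$ on all quasi-coherent sheaves. The topological bridge is that $MU_*X$ is canonically a comodule and that cofiber sequences induce long exact sequences of comodules.

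For the ``if'' direction, assume $\phi$ is flat. Then $(-) \otimes_{MU_*} R$ preserves every short (hence long) exact sequence of comodules, in particular those arising from cofiber sequences of spectra, giving the exactness axiom of the proposed homology theory. Wedge additivity, suspension, and homotopy invariance are inherited directly from $MU_*(-)$, since tensoring with $R$ commutes with direct sums and with the shift.

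For the ``only if'' direction, one must upgrade exactness on comodules of the form $MU_*X$ to exactness on the entire category of comodules. The plan is to reduce to a countable list of test comodules: by Landweber's classification, the invariant prime ideals of $MU_*$ are precisely the $I_n = (p, v_1, v_2, \ldots, v_{n-1})$, and every finitely presented comodule admits a filtration whose subquotients are shifts of $MU_*/I_n$. Checking $\mathrm{Tor}$-vanishing against each $MU_*/I_n$ is in turn controlled by the cofiber sequences $MU/I_n \xrightarrow{v_n} MU/I_n \to MU/I_{n+1}$ built out of $MU$-module spectra, and this reduces flatness of $\phi$ to regularity of the sequence $(p, v_1, v_2, \ldots)$ on $R$ --- Landweber's explicit criterion, which is exactly the condition that $\phi$ is flat onto the height-stratified stack $\mathcal{M}_{MU}$.

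The main obstacle is this converse direction, and specifically the realizability/filtration step: not every comodule comes from a spectrum, and some care is needed in choosing enough realizable test comodules to detect flatness. Once that is in place, the equivalence between the Landweber regularity criterion and stack-theoretic flatness over $\mathcal{M}_{MU}$ is essentially formal.
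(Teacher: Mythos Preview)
The paper does not give a proof of this theorem at all: it is quoted as background, with attribution to Landweber [Lan] and the exposition [H], and is used only as a black box later on. So there is no ``paper's own proof'' to compare your attempt against.

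That said, your plan is essentially the standard argument one finds in those references. The ``if'' direction is exactly as you describe: comodules over $(MU_*,MU_*MU)$ are quasi-coherent sheaves on $\mathcal{M}_{MU}$, flatness of $\phi$ means $\phi^*$ is exact, and $MU_*(-)$ already turns cofiber sequences into long exact sequences of comodules. For the ``only if'' direction your reduction is also the right one: Landweber's classification of invariant primes and the filtration of finitely presented comodules by the $MU_*/I_n$ pin flatness over $\mathcal{M}_{MU}$ down to regularity of $(p,v_1,v_2,\ldots)$ on $R$, and the topological realizations $MU/I_n$ let you test exactly those conditions using only the homology-theory axiom. One small point to be careful about when you actually write it out: $MU_*(MU/I_n)$ is $MU_*MU\otimes_{MU_*}MU_*/I_n$, not $MU_*/I_n$ itself, so extracting the regularity of $v_n$ on $R/I_nR$ from exactness of the tensored sequence uses that $MU_*MU$ is \emph{free} over $MU_*$ (so the extra polynomial generators can be stripped off). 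With that bookkeeping in place, your outline is a correct proof of the cited theorem.
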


\begin{theorem}\label{brownrep} ([A])
Every homology theory is represented by a spectrum, which is unique up to homotopy equivalence. Every morphism of homology theories is represented by some morphism of representing spectra. This morphism of spectra is well-defined up to phantom maps.
\end{theorem}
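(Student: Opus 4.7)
The plan is to reduce the statement to the classical Brown representability theorem for cohomology theories on finite spectra, then extend to the whole stable homotopy category by a filtered colimit argument. Given a homology theory $h_\ast$, I would first restrict it to the full subcategory $\CatOf{Sp}^{\mathrm{fin}}$ of finite (equivalently, dualizable) spectra. Spanier--Whitehead duality $D$ on $\CatOf{Sp}^{\mathrm{fin}}$ is a contravariant self-equivalence carrying cofiber sequences to cofiber sequences and finite coproducts to finite products, so $h^n(X) := h_{-n}(DX)$ defines a cohomology theory on $\CatOf{Sp}^{\mathrm{fin}}$. Invoking Brown representability for cohomology theories on the compactly generated triangulated category of spectra (in its Adams/Neeman form) then produces a spectrum $E$ together with natural isomorphisms $h^n(X) \cong [X, E]_{-n}$ for $X$ finite.

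Dualizing back gives $h_n(X) \cong [DX, E]_n \cong \pi_n(E \sm X)$ for $X$ finite. For a general spectrum $Y$, I would write $Y = \mathrm{colim}_\alpha Y_\alpha$ as a filtered colimit of its finite subspectra; both $h_n$ (by axiom) and $\pi_n(E \sm -)$ preserve filtered colimits, so
$$h_n(Y) \cong \mathrm{colim}_\alpha\, h_n(Y_\alpha) \cong \mathrm{colim}_\alpha\, \pi_n(E \sm Y_\alpha) \cong \pi_n(E \sm Y),$$
which shows that $E$ represents $h_\ast$.

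For the morphism half, a natural transformation $h_\ast \to h'_\ast$ restricts to a natural transformation of representable functors $[-, E] \to [-, E']$ on $\CatOf{Sp}^{\mathrm{fin}}$, which by Yoneda is specified by a compatible family of classes in $[Y_\alpha, E']$ for any filtration of a target $Y$ by finite subspectra. The Milnor short exact sequence
$$0 \to \mathrm{lim}^1_\alpha\, [Y_\alpha, \Sigma^{-1} E'] \to [Y, E'] \to \mathrm{lim}_\alpha\, [Y_\alpha, E'] \to 0$$
shows that such a compatible family lifts to a genuine map $E \to E'$, unique up to the $\mathrm{lim}^1$ term, which is precisely the phantom-map ambiguity. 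Uniqueness of the representing spectrum up to homotopy equivalence then follows: applied to the identity natural transformation of $h_\ast$ viewed as represented by two spectra $E, E'$, any choice of lift induces an isomorphism on $\pi_\ast(-\sm X)$ for every finite $X$, in particular on $\pi_\ast$, hence is a weak equivalence.

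I expect the real technical input to be Brown representability on $\CatOf{Sp}^{\mathrm{fin}}$ itself, which I am treating as a standard black box; the remaining steps (passing between homology and cohomology via Spanier--Whitehead duality, and organizing the $\mathrm{lim}^1$ obstruction) are routine but must be arranged with care so that the ambiguities in the final statement are attributed to the correct source, namely the $\mathrm{lim}^1$ of hom-groups into $\Sigma^{-1} E'$.
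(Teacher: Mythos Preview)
The paper does not supply its own proof of this statement: Theorem~\ref{brownrep} is quoted as a black-box input and attributed to Adams's 1971 paper [A], with no argument given. So there is nothing in the paper to compare your proposal against beyond the citation itself.

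That said, your outline is essentially the classical argument that Adams gives. A couple of small points of care. First, when you invoke ``Brown representability for cohomology theories on the compactly generated triangulated category of spectra,'' the functor $h^n(X)=h_{-n}(DX)$ is only defined on \emph{finite} spectra, so you genuinely need the Adams variant (representability for cohomology functors on $\CatOf{Sp}^{\mathrm{fin}}$), not the ordinary Brown theorem for functors on all of $\CatOf{Sp}$; this is the nontrivial input and in Adams's original treatment carries a countability hypothesis on the values of $h_\ast$. Second, in your morphism paragraph the filtered object should be the source spectrum $E$ rather than an arbitrary $Y$: the natural transformation on finite spectra yields a compatible system in $\lim_\alpha[E_\alpha,E']$ for a filtration $E=\mathrm{colim}_\alpha E_\alpha$ by finite subspectra, and the Milnor sequence for $[E,E']$ then gives the lift and identifies the $\lim^1$ indeterminacy with phantom maps. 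With those adjustments your sketch matches the standard proof that [A] records.
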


\begin{theorem}\label{nophantom} ([HS])
Let $MU_*\rightarrow E_*$ and $MU_*\rightarrow F_*$ be two Landweber exact formal group laws and let $E$ and $F$ be spectra representing the homology theories $E_*X= MU_*X\otimes_{MU_*}E_*$ and $F_*X= MU_*X\otimes_{MU_*}F_*$. There are no phantom maps $E\rightarrow F$.
\end{theorem}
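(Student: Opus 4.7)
The plan is to show that the natural map
\[
[E,F]_* \longrightarrow \operatorname{Hom}^*_{MU_*MU}(MU_*E,\, MU_*F), \qquad f\mapsto MU_*f,
\]
is an isomorphism -- a universal coefficient theorem for Landweber exact targets. Since a phantom map $f$ automatically satisfies $MU_*f=0$ (write $E=\operatorname{hocolim}_\alpha E_\alpha$ over its finite subspectra; then $MU_*f = \operatorname{colim}_\alpha MU_*(f|_{E_\alpha}) = 0$, because $MU_*(-)$ preserves filtered colimits and $f|_{E_\alpha}=0$ by hypothesis), \emph{injectivity} of this map is exactly the statement that there are no phantoms. I would aim for the stronger isomorphism because it is what falls out cleanly from the spectral sequence machinery.

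To establish the comparison, I would run the $MU$-based Adams spectral sequence
\[
E_2^{s,t} = \Ext^{s,t}_{MU_*MU}(MU_*E,\, MU_*F) \Longrightarrow [E,F]_{t-s},
\]
whose convergence for Landweber exact $F$ follows from standard $MU$-nilpotent completeness arguments applied to the target. Phantom maps are precisely the elements of $[E,F]_*$ sitting in positive Adams filtration, so the absence of phantoms is equivalent to $E_\infty$ being concentrated on the $s=0$ line, i.e.\ to the triviality of the positive-filtration contributions.

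The main obstacle is precisely this concentration statement. By \Cref{LEFT}, Landweber exactness of $E$ and $F$ is exactly flatness of the classifying maps $\Spec E_* ,\Spec F_*\to \mathcal{M}_{MU}$, so $MU_*E$ and $MU_*F$ are flat $MU_*$-modules and Landweber exact comodules form a wide, flat subcategory of all $MU_*MU$-comodules. I would exploit this flatness -- either by change-of-rings along a flat presentation of $\mathcal{M}_{MU}$, or by constructing resolutions of $MU_*F$ internal to the subcategory of Landweber exact comodules and checking that the higher $\Ext^s$ groups do not contribute to the Adams filtration in the limit. This homological analysis, whose details I expect to be genuinely subtle (the groups $\Ext^{\geq 1}_{MU_*MU}(MU_*E, MU_*F)$ need not vanish outright: for Morava $E$-theories they recover cohomology of the Morava stabilizer group), is the algebraic content of [HS].
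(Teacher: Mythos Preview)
The paper supplies no proof of \Cref{nophantom}: it is listed in the introductory section among results quoted from the literature, attributed to Hovey--Strickland [HS], and is invoked in Section~2 only as a black box. There is therefore no in-paper argument to compare your sketch against.

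On the sketch itself, there is a genuine gap. The assertion that ``phantom maps are precisely the elements of $[E,F]_*$ sitting in positive Adams filtration'' and that ``the absence of phantoms is equivalent to $E_\infty$ being concentrated on the $s=0$ line'' is incorrect, and your own final paragraph already points to the reason: for Morava $E$--theories the groups $\Ext^{\ge 1}_{MU_*MU}(MU_*E,MU_*F)$ are nonzero and genuinely survive to $E_\infty$ (they account for the continuous Morava stabilizer cohomology and for nontrivial self-maps of $E_n$), yet those self-maps are certainly not phantom. So the ``stronger isomorphism'' you aim for is false in the cases of interest. The repairable version is that a phantom map has \emph{infinite} Adams filtration, and what is then needed is strong convergence of the $MU$--Adams spectral sequence (Hausdorffness of the filtration on $[E,F]_*$), not collapse to the $0$--line; your sketch does not isolate or justify that step. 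For comparison, the argument in [HS] takes neither route: one shows that a Landweber exact spectrum with even homotopy is a filtered homotopy colimit of finite spectra $E_\alpha$ with free, evenly concentrated $MU_*$--homology, so that $F^*E_\alpha\cong\operatorname{Hom}_{MU_*}(MU_*E_\alpha,F_*)$ by the universal coefficient theorem, and then verifies a Mittag--Leffler condition on this tower to kill the $\lim{}^1$ term in the Milnor sequence that houses the phantoms.
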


\begin{theorem}\label{lubintate} ([LT])
Let $k$ be a perfect field of characteristic $p$. Let $f\co \mathrm{Laz}\rightarrow k$ be an ungraded formal group law of height $n$. Define the ring $$LT(k)=W(k)[[u_1,u_2,\ldots,u_{n-1}]][\beta^\pm], \quad |\beta|=2.$$ There is a Landweber exact formal group law $$LT(f)\co MU_*\rightarrow LT(k)$$ satisfying the following:
\begin{enumerate}
\item $LT(f)$ is a universal deformation of $f$ up to $\star$--isomorphism.
\item If $g\co \mathrm{Laz}\rightarrow k$ is an ungraded formal group law that is isomorphic to $f$, then $LT(g)$ can be chosen to be isomorphic to $LT(f)$.
\item If $t\co k\rightarrow k'$ is a map of perfect characteristic $p$ fields and $T\co LT(k)\rightarrow LT(k')$ denotes the map which is $W(t)\co W(k)\rightarrow W(k')$ on coefficients and sends $u_i$ to $u_i$ and $\beta$ to $\beta$, then $LT(t \circ f)$ can be chosen such that $T\circ LT(f)=LT(t\circ f)$.
\end{enumerate}
\end{theorem}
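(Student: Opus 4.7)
The plan is to reduce all three claims to the classical Lubin--Tate theorem on universal deformations of formal groups, then handle the grading and functoriality as formal consequences.

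For part (1), I would study the functor $\mathrm{Def}_f$ on complete local Noetherian $W(k)$-algebras with residue field $k$ which sends $A$ to the set of $\star$--isomorphism classes of ungraded formal group laws over $A$ reducing to $f$ modulo the maximal ideal. The Lubin--Tate argument verifies Schlessinger's representability criteria using three ingredients: height-$n$ formal groups admit no nontrivial infinitesimal $\star$--automorphisms (checked by direct inspection of the $p$-series and the height hypothesis), the tangent space is $(n-1)$-dimensional over $k$, and the obstructions vanish. Together these give pro-representability of $\mathrm{Def}_f$ by $W(k)[[u_1, \ldots, u_{n-1}]]$, with the universal deformation built from any chosen lift $\tilde f$ of $f$ to $W(k)$ (which exists because $\mathrm{Laz}$ is polynomial), then modified using the parameters $u_i$ to control the $p$-series.

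To upgrade this to a graded $MU_*$--algebra, I would set $LT(k) = W(k)[[u_1, \ldots, u_{n-1}]][\beta^{\pm}]$ with $|\beta| = 2$ and rescale coefficients by powers of $\beta$ to produce a graded ring map $LT(f)\co MU_* \to LT(k)$. Landweber exactness then follows from Theorem~\ref{LEFT}: under the standard conventions, $v_i$ maps to a unit multiple of $\beta^{p^i-1} u_i$ for $0 < i < n$ (with $v_0 = p$), $v_n$ maps to a unit, and so the regularity condition of Landweber's criterion is immediate.

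For part (2), any $\star$--isomorphism $f \xrightarrow{\sim} g$ over $k$ transports deformations of $f$ to deformations of $g$, so by the universal property of $\mathrm{Def}_f$ one obtains a $\star$--isomorphism $LT(f) \cong LT(g)$, and one is then free to pick the representative $LT(g)$ literally isomorphic to $LT(f)$. For part (3), applying $T$ coefficient-wise to $LT(f)$ gives a deformation of $t \circ f$ over $LT(k')$; the universal property of $LT(t\circ f)$ then lets us choose its representative so that $T \circ LT(f) = LT(t\circ f)$ on the nose.

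The main obstacle is step (1): the original Lubin--Tate argument requires careful cohomological bookkeeping for deformations (the tangent and obstruction calculations, plus the absence of infinitesimal automorphisms). Once that is in hand, parts (2) and (3) fall out as essentially formal consequences of the universal property.
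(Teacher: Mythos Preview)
The paper does not prove this theorem: it is stated in the introductory section as a citation to Lubin--Tate [LT] (with Landweber exactness folded in) and is used as a black box in the subsequent lemmas. There is therefore no proof in the paper to compare your proposal against.

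That said, your outline is a reasonable sketch of the classical argument. Two small points of care. In part~(3), invoking the universal property of $LT(t\circ f)$ only yields a classifying map from the universal ring to $LT(k')$; to get the \emph{equality} $T\circ LT(f)=LT(t\circ f)$ you really want to argue that $T\circ LT(f)$ is itself a universal deformation of $t\circ f$ (e.g.\ because the tangent-space and obstruction computations are insensitive to the field extension $k\hookrightarrow k'$), and then simply declare $LT(t\circ f):=T\circ LT(f)$. Similarly in part~(2), what you need is that a $\star$--isomorphism $f\simeq g$ induces an equivalence of deformation functors, hence an isomorphism of representing rings intertwining the universal formal group laws; your phrasing essentially says this but could be tightened. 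The Landweber-exactness check you describe (regularity of $p,u_1,\ldots,u_{n-1}$ with $v_n$ a unit) is the standard one.
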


\begin{theorem}\label{lazard} ([Laz] Theorem IV)
All ungraded formal group laws of a fixed height $n$ over an algebraically closed field of positive characteristic are isomorphic.
\end{theorem}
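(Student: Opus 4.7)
The plan is to construct a single canonical height-$n$ formal group law over $k$ and prove that any other height-$n$ formal group law over $k$ is isomorphic to it; transitivity of isomorphism then yields the theorem. A convenient choice of model is a Honda formal group law $H_n$, defined as the reduction mod $p$ of a formal group law over $W(k)$ whose $p$-series is congruent to $x^{p^n}$; this reduction manifestly has height $n$.

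Given an arbitrary ungraded formal group law $F$ over $k$ of height $n$, I would build an isomorphism $\phi \co F \to H_n$ as a formal power series $\phi(x) = \sum_{i \geq 1} c_i x^i$ by successive approximation. At stage $m$, suppose $\phi_m(x) = \sum_{i=1}^{m} c_i x^i$ already satisfies $\phi_m(F(x,y)) \equiv H_n(\phi_m(x), \phi_m(y)) \pmod{(x,y)^{m+1}}$. Extending to degree $m+1$ amounts to choosing $c_{m+1} \in k$ so that the defect in degree $m+1$ vanishes. Unwinding the constraint yields a polynomial equation in $c_{m+1}$ over $k$ whose shape depends on the arithmetic of $m+1$: when $m+1$ is not a power of $p$ the equation is linear and so is always solvable, while when $m+1 = p^r$ one obtains instead an additive (Artin--Schreier-type) equation involving $c_{m+1}^{p^s}$ for some $s$ determined by the height-$n$ data.

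The underlying input is Lazard's analysis of symmetric $2$-cocycles on $\G_a$, which identifies which obstructions arise at each stage. Using the height-$n$ hypothesis on both $F$ and $H_n$, in degrees $p^r$ with $r < n$ the obstruction vanishes outright (both $p$-series begin in degree $p^n$), while in degrees $p^r$ with $r \geq n$ the leading coefficient of $[p]_F$ forces the resulting equation to be nondegenerate, and hence solvable since $\bar k$ admits arbitrary $p$-th roots and roots of additive polynomials. The main obstacle is this cocycle bookkeeping and the verification that algebraic closure is precisely what is needed: over a general perfect field some surviving equations represent genuine obstructions, which is reflected in the richer Galois cohomology governing isomorphism classes of height-$n$ formal group laws over non-closed fields (e.g.\ $\F_p$).
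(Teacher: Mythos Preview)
The paper does not prove this statement: it is quoted as a black-box input from Lazard's 1955 paper (the citation [Laz], Theorem~IV), with no argument supplied. So there is nothing in the paper to compare your proposal against; you are attempting to fill in what the paper merely cites.

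Your overall strategy---fix a Honda model, build an isomorphism coefficient by coefficient using Lazard's symmetric $2$-cocycle lemma, and invoke algebraic closure to solve the resulting equations---is the classical one, but the step at $p$-power degrees is not right as written. When $m+1 = p^r$, passing from $\phi_m$ to $\phi_m + c_{m+1} x^{p^r}$ changes the degree-$(m+1)$ defect by $c_{m+1}\bigl((x+y)^{p^r} - x^{p^r} - y^{p^r}\bigr)$, which is identically zero in characteristic $p$. Hence $c_{m+1}$ simply does not appear in the degree-$(m+1)$ constraint: there is no equation in $c_{m+1}$ to solve there, Artin--Schreier or otherwise. (Likewise, the claim that the obstruction vanishes for $r<n$ because ``both $p$-series begin in degree $p^n$'' conflates the $p$-series with the coefficient of $C_{p^r}$ in $F$; height constrains the former, not directly the latter.) The honest argument at $p$-power degrees works differently: in $p$-typical coordinates one writes $\phi(x) = \sum_{i \ge 0} t_i x^{p^i}$ and compares $p$-series, and the height-$n$ hypothesis then produces, for each $i>n$, a separable equation of the shape $v_n\, t_{i-n}^{\,p^n} - v_n^{\,p^{i-n}} t_{i-n} = (\text{known})$ in the \emph{earlier} coefficient $t_{i-n}$, solvable over $\overline{k}$. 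So the Artin--Schreier flavor is real, but it lives one ``height'' back, and the naive scheme of freezing $c_1,\dots,c_m$ before solving for $c_{m+1}$ must be reorganized accordingly.
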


\section{The Proof}
\begin{lemma}
Let $R$ be a ring with two Landweber exact formal group laws $e,f\co MU_*\rightarrow R$, and let $E$ and $F$ be the spectra corresponding to $e$ and $f$ under the Landweber exact functor theorem. If there is a ring extension $u\co  R\rightarrow S$ which is split as an $R$-module map and over which the formal group laws $u\circ e$ and $u\circ f$ are isomorphic and Landweber exact, then $E$ and $F$ have the same homotopy type.
\end{lemma}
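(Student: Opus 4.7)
The plan is to produce an equivalence over the split extension $S$ and then descend. Applying Theorem~\ref{LEFT} to $u\circ e$ and $u\circ f$ yields spectra $E'$ and $F'$ representing homology theories with $E'_*X \cong E_*X\otimes_R S$ and $F'_*X \cong F_*X\otimes_R S$. The isomorphism $\phi$ corresponds to a ring map $MU_*MU\to S$, and the standard $MU_*MU$-comodule twist converts it into a natural $S$-linear isomorphism $\alpha\colon E'_*(-)\cong F'_*(-)$ of homology theories; Theorems~\ref{brownrep} and~\ref{nophantom} then lift $\alpha$ to a spectrum equivalence $E'\simeq F'$.

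Next, the ring map $u$ and the $R$-linear splitting $s$ realize $E$ and $F$ as retracts of $E'$ and $F'$. The natural $R$-linear transformations $E_*X\to E'_*X$, $x\mapsto x\otimes 1$, and $E'_*X\to E_*X$, $x\otimes t\mapsto x\cdot s(t)$, are maps of homology theories whose composition is the identity on $E_*X$. Brown representability plus no-phantoms lifts these to spectrum maps $\iota_E\colon E\to E'$ and $\pi_E\colon E'\to E$ with $\pi_E\circ\iota_E\simeq\mathrm{id}_E$, and analogously $\iota_F,\pi_F$ realize $F$ as a retract of $F'$. The candidate equivalence is then $g := \pi_F\circ\alpha\circ\iota_E\colon E\to F$, with candidate inverse $h := \pi_E\circ\alpha^{-1}\circ\iota_F$.

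The main obstacle is verifying that $g$ and $h$ are mutually inverse, after which Brown representability and no-phantoms conclude. Writing $\alpha$ in block form with respect to the $R$-module decompositions $E'_*X = E_*X \oplus (E_*X\otimes_R\ker s)$ and $F'_*X = F_*X \oplus (F_*X\otimes_R\ker s)$ induced by the splitting, the composition $g\circ h$ equals $\mathrm{id} - bc'$, where $b,c'$ are off-diagonal blocks of $\alpha,\alpha^{-1}$ that do not visibly vanish from the retraction data alone. I expect the resolution to exploit the genuine $S$-linearity of $\alpha$ (stronger than the mere $R$-linearity of the projections $\pi_E,\pi_F$): unpacking the Sweedler calculus for the $MU_*MU$-coaction through $\phi$ and $\phi^{-1}$ and repeatedly applying $s\circ u = \mathrm{id}_R$ should force the off-diagonal contributions to cancel, yielding the desired natural isomorphism $E_*(-)\cong F_*(-)$ of homology theories and hence the equivalence $E\simeq F$.
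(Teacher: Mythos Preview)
Your strategy of producing an equivalence over $S$ and retracting is sound, and in fact very close to the paper's argument, but the endgame you sketch is both harder than necessary and will not work as written. The off-diagonal blocks $b, c'$ genuinely need not vanish: $\alpha$ is $S$-linear, but the decomposition $E'_*X = E_*X \oplus (E_*X \otimes_R \ker s)$ is only $R$-linear, not $S$-linear, so $S$-linearity of $\alpha$ gives you no control over how it interacts with that decomposition. No amount of Sweedler manipulation will force $h \circ g$ to equal the identity \emph{natural transformation}; indeed, there are many natural self-transformations of $E_*(-)$ restricting to the identity on $\pi_*$ (one for each $E_*$-module splitting of $E_* \to E_*E$), and there is no reason your composite lands on the canonical one.

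The fix is to abandon the attempt to show $g$ and $h$ are mutually inverse and simply check that your single map $g$ induces an isomorphism on $\pi_*$. This is immediate: the comodule twist $\alpha$ restricts to the identity $S \to S$ on coefficients (the coaction on $MU_*(\mathrm{pt})$ is the unit, and your ring map $MU_*MU \to S$ sends $1 \mapsto 1$), so $g|_{\pi_*} = s \circ \mathrm{id}_S \circ u = \mathrm{id}_R$, whence $g$ is a weak equivalence. This is precisely how the paper argues: it first reformulates the problem as producing an $F_*$-module splitting of $\eta\colon E_* \to F_*E$ (equivalently, a spectrum map $E \to F$ that is the identity on $\pi_*$), and then manufactures such a splitting by a short diagram chase transporting the splitting of $G_* \to H_*G$ (coming from the isomorphism over $S$ and the multiplication on $G$) along the $R$-split inclusion $u$. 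Your retract-and-compose construction produces the same splitting; you only needed the simpler finish.
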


\begin{proof}
First we show that $E\simeq F$ if and only if the map $\eta\co E_*\rightarrow F_*E$ (induced by $1\wedge id\co  \mathbb{S}\wedge E\rightarrow F\wedge E$) is split as a map of $F_*$-modules. Note that $E_*$ is canonically an $F_*$ module by the \textit{equality} $E_*=R=F_*$. We claim that $F_*E$ is flat over $F_*$. Indeed, interpret Landweber exactness as flatness of maps to $\mathcal{M}_{MU}$ (cf.\ Theorem \ref{LEFT}). Consider the following pullback diagram
$$\begin{tikzcd}
\Spec (F_*E)\arrow[d]\arrow[r] &\Spec(E_*)\arrow[d,"e"] \\
 \Spec (F_*)\arrow[r,"f"] &\mathcal{M}_{MU} .
\end{tikzcd}$$ The right vertical map is flat by the assumption that $e$ is Landweber exact, hence so is the left vertical map as flat maps are preserved under pullback, i.e., $F_*E$ is flat over $F_*$. It follows that $(F\wedge E)_*X=F_*X\otimes_{F_*}F_*E$. For an $F_*$-module map $\xi\co F_*E\rightarrow F_*$ consider, for each spectrum $X$, the diagram
$$\begin{tikzcd}
 E_*X\arrow[rr,"(1\wedge id)\circ(-)"]& &(F\wedge E)_*X=F_*X\otimes_{F_*}F_*E\arrow[d,"id\otimes \xi"]\\
  & & F_*X\otimes_{F_*}F_* .
\end{tikzcd}$$ 
The diagonal composite $t_\xi\co E_*X\rightarrow F_*X$ defines a morphism of homology theories. Applying Brown representability for maps of homology theories (cf.\ Theorem \ref{brownrep}) and the absence of phantom maps between Landweber exact homotopy types (cf.\ Theorem \ref{nophantom}) produces a map of spectra $T_\xi\co E\rightarrow F$ well-defined up to homotopy. The assignment $\xi\mapsto T_\xi$ defines a map $\phi\co  \CatOf{Modules}_{F_*}(F_*E,F_*)\rightarrow F^*E$. Now suppose a splitting of $\eta$ exists and call it $s$. Then there is a map $\phi(s)\co E\rightarrow F$ which on homotopy groups (set $X=\mathrm{pt}$ above) is the composite of $\eta$ and $s$, which is the identity map, so $E\simeq F$. In the other direction, if $E\simeq F$ then $F_*E\simeq E_*E$, so it suffices to split $(1\wedge id)_*\co E_*\rightarrow E_*E$, which is split by the multiplication map $E\wedge E\rightarrow E$. 

Now we show that $\eta\co E_*\rightarrow F_*E$ is split if there is a ring extension $u\co  R\rightarrow S$ which is split as an $R$-module map, and such that the formal group laws $u\circ e$ and $u\circ f$ are isomorphic and Landweber exact. Write $g:=u\circ e$ and $h:=u\circ f$. Assume they are isomorphic and Landweber exact. Let $G$ and $H$ be the spectra associated to $g$ and $h$. Then we have the following commutative diagram of $F_*(=R)$-modules
$$\begin{tikzcd}
S=G_* \arrow[rr,"1\otimes 1\otimes id"] & & H_*G= S\otimes_{MU_*}MU_*MU\otimes_{MU_*}S\\
R=E_*\arrow[u,"u"]\arrow[rr,"1\otimes 1\otimes id"]  & & F_*E=R\otimes_{MU_*}MU_*MU\otimes_{MU_*}R\arrow[u,"u\otimes id\otimes u"]  .
\end{tikzcd}$$

Since the right vertical map is split (as a map of $F_*$-modules), splitting the bottom horizontal map (as a map of $F_*$-modules) is equivalent to splitting the diagonal map (as a map of $F_*$-modules). Moreover, the top horizontal map is split (as an $S=H_*$-algebra (and hence $F_*$-algebra) map in fact) by first choosing an isomorphism of $g$ and $h$, which induces an $H_*$-algebra isomorphism $H_*G\simeq G_*G$ and then using the multiplication map $G\wedge G\rightarrow G$ to split $G_*\rightarrow G_*G$. Therefore splitting the diagonal map is equivalent to splitting the left vertical map (as a map of $F_*$-modules).
\end{proof}

\begin{lemma} At height $n<\infty$ the homotopy type of Morava $E$-theory depends only on the choice of a perfect characteristic $p$ field. In other words, let $k$ be a perfect characteristic $p$ field and let $E(n)_1$ and $E(n)_2$ be two Morava $E$-theory spectra corresponding to two arbitrary height $n< \infty$ ungraded formal group laws $e_1$ and $e_2$ over $k$. Then as spectra, $E(n)_1\simeq E(n)_2$.
\end{lemma}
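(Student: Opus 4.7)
The plan is to apply the previous lemma with $R = LT(k)$, $S = LT(\bar k)$ where $\bar k$ is an algebraic closure of $k$, and $u\co LT(k) \to LT(\bar k)$ the natural map acting as $W(k) \hookrightarrow W(\bar k)$ on Witt coefficients and fixing $u_1,\ldots,u_{n-1}$ and $\beta$. The two Landweber exact formal group laws of that lemma will be $LT(e_1), LT(e_2)\co MU_* \to LT(k)$, whose representing spectra are $E(n)_1$ and $E(n)_2$ respectively.

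The ``isomorphic and Landweber exact'' hypothesis should follow cleanly from the cited results. Writing $\bar e_i$ for the base change of $e_i$ along $k \hookrightarrow \bar k$, Theorem \ref{lubintate}(3) lets us arrange $u \circ LT(e_i) = LT(\bar e_i)$. Lazard's theorem (Theorem \ref{lazard}) tells us that $\bar e_1$ and $\bar e_2$ are isomorphic as ungraded formal group laws over $\bar k$, after which Theorem \ref{lubintate}(2) lets us further choose $LT(\bar e_1)$ and $LT(\bar e_2)$ to be isomorphic over $LT(\bar k)$. Landweber exactness of each $LT(\bar e_i)$ is part of Theorem \ref{lubintate}.

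The real content is splitting $u$ as an $LT(k)$-module map. Since $LT(\bar k)$ is built from $W(\bar k)$ by adjoining $u_1,\ldots,u_{n-1}$ as power series variables and $\beta^\pm$ as a Laurent generator, a $W(k)$-linear splitting of $W(k) \hookrightarrow W(\bar k)$ induces an $LT(k)$-linear splitting of $u$ coefficientwise, because both the power series and Laurent polynomial constructions preserve $W(k)$-module direct sum decompositions. To split the Witt inclusion, I would observe that truncated Witt vectors commute with filtered colimits of rings, so $W(\bar k) = \varprojlim_n W_n(\bar k) = \varprojlim_n \varinjlim_\alpha W_n(k_\alpha)$ exhibits $W(\bar k)$ as the $p$-adic completion of $\varinjlim_\alpha W(k_\alpha)$, where $\{k_\alpha\}$ ranges over finite subextensions of $\bar k / k$. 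Each $W(k_\alpha)$ is free of finite rank over $W(k)$ with $1$ part of a $W(k)$-basis, so the inclusion $W(k) \hookrightarrow W(k_\alpha)$ is split; choosing these splittings compatibly along a cofinal chain (by transfinite induction if needed) yields a $W(k)$-module decomposition $\varinjlim_\alpha W(k_\alpha) = W(k) \oplus N$. Because $p$-adic completion commutes with finite direct sums and $W(k)$ is already $p$-adically complete, $W(\bar k) = W(k) \oplus \widehat{N}_p$ as $W(k)$-modules, and the desired splitting follows.

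The main anticipated obstacle is the compatible choice of splittings along the (possibly uncountable) chain of finite subextensions, together with the bookkeeping needed to verify that $p$-adic completion interacts with the colimit and the direct sum decomposition as claimed. Once the splitting is in hand, the previous lemma immediately delivers the conclusion.
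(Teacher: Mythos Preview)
Your approach matches the paper's exactly: apply the preceding lemma with $R=LT(k)$, $S=LT(\bar k)$, and $u$ the base-change map, use Lubin--Tate theory together with Lazard's theorem to verify the ``isomorphic and Landweber exact'' hypothesis over $S$, and reduce the splitting of $u$ to that of $W(k)\hookrightarrow W(\bar k)$. The paper does not argue this last splitting at all; it simply cites Sawin's Mathoverflow answer [S].

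The only substantive issue is in your added sketch of that splitting, and it is exactly the obstacle you flag: the directed poset of finite subextensions of $\bar k/k$ need not admit a cofinal \emph{chain} when $k$ is uncountable (think of the analogue of finite subsets of an uncountable set under inclusion), so the transfinite-induction step is not justified as written. A cleaner argument avoids the filtration entirely: choose a $k$-basis $B$ of $\bar k$ with $1\in B$, lift each $b\in B$ to $W(\bar k)$ (e.g.\ via Teichm\"uller), and note that the induced map from the $p$-adic completion of $\bigoplus_{b\in B} W(k)$ to $W(\bar k)$ is an isomorphism modulo $p$ between $p$-complete $p$-torsion-free $W(k)$-modules, hence an isomorphism; projection onto the $1$-coordinate then splits $W(k)\hookrightarrow W(\bar k)$.
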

\begin{proof}
It suffices to check the conditions of the lemma. Let $\overline{k}$ be the algebraic closure of $k$. Let $R$ and $S$ be the (2-periodic) Lubin-Tate deformation rings $$R:=W(k)[[u_1,...,u_{n-1}]][\beta^\pm]$$ $$S:=W(\overline{k})[[u_1,...,u_{n-1}]][\beta^\pm].$$ Let $e$ and $f$ be formal group laws over $R$ universally deforming $e_1$ and $e_2$. Then $e$ and $f$ are Landweber exact (cf.\ Theorem \ref{lubintate}) and the corresponding spectra are the Morava $E$-theory spectra $E(n)_1$ and $E(n)_2$ (cf.\ [P] Definition 3.5.4). Let $u\co R\rightarrow S$ be the map that extends coefficients by $W(k)\rightarrow W(\overline{k})$, sends $u_i$ to $u_i$, and $\beta$ to $\beta$. Then the formal group laws $g:=u\circ e$ and $h=u\circ f$ over $S$ are isomorphic and Landweber exact (again, cf.\ Theorem \ref{lubintate}), since they universally deform the isomorphic formal group laws $u\circ e$ and $u\circ f$ over $\overline{k}$ (cf.\ Theorem \ref{lazard}).  Finally, the map $u\co R\rightarrow S$ is split as an $R$-module map, because the map $W(k)\rightarrow W(\overline{k})$ is split as a $W(k)$-module map (cf.\ [S]).
\end{proof}

\section{Acknowledgements}
We would like to thank Rodrigo Marlasca Aparicio for posting the Mathoverflow question ``Does the spectrum of Morava E-theory depend only on height?" [Ap] which prompted this note. We would also like to thank Will Sawin for providing us with the statement we needed regarding the splitting of $p$-typical Witt vectors (in private communication as well as the Mathoverflow post [S]).

\section{References}

\noindent
[A] Adams, J.F.
A variant of E. H. Brown's representability theorem,
Topology,
Volume 10, Issue 3,
1971,
Pages 185-198,
ISSN 0040-9383,

\ 

\noindent
[Ap] Aparicio, R. Mathoverflow question. https://mathoverflow.net/questions/412686/does-the-spectrum-of-morava-e-theory-depend-only-on-height?noredirect=1{}\&{}lq=1

\

\noindent
[GH] Goerss, P. and M. Hopkins. Moduli Spaces of Commutative Ring Spectra

\

\noindent
[H] Hohnhold, H. The Landweber exact functor theorem, Math. Surveys Monogr., vol. 201, Amer. Math. Soc.,
Providence, RI, 2014, pp. 35–45. MR 3328537

\

\noindent
[HS] Hovey, M. and Strickland, N. P. Morava K-theories and localisation, Mem.
Amer. Math. Soc., 139 (1999), no. 666.

\

\noindent
[Lan] Landweber, P. Homological Properties of Comodules over MU${}_*$(MU) and BP${}_*$(BP). American Journal of Mathematics, Volume 98, No. 3, pp. 591-610, 1976

\

\noindent
[Laz] Lazard, M. Sur les groupes de Lie formels a un parametre. \textit{Bull. Soc. Math. France}, 83:251-274, 1955.

\

\noindent
[LT] Lubin, J. and Tate, J. Formal moduli for one-parameter formal LIe groups. \text{Bull. Soc. Math. France}, 94:49-59, 1966.

\

\noindent
[L] Lurie, J. Elliptic Cohomology II: Orientations.

\

\noindent
[P] Peterson, E. Formal geometry and bordism operations. \textit{Cambridge studies in advanced mathematics}. 177, 2019.

\

\noindent
[PV]  Pstragowski P. and Vankoughnett, P. Abstract Goerss-Hopkins Theory

\

\noindent
[R] Rezk, C. Notes on the Hopkins-Miller Theorem

\

\noindent
[S] Sawin, W. Mathoverflow answer. https://mathoverflow.net/questions/413773/splitting-the-witt-vectors-of-overline-mathbbf-p

\end{document}